\newtheorem{theorem}{Theorem}[section]
\newtheorem{lemma}[theorem]{Lemma}
\newtheorem{prop}[theorem]{Proposition}
\newtheorem{remark}{Remark}
\newcommand{\ds}{\displaystyle}
\newcommand{\p}{\partial}
\newcommand{\eqnref}[1]{(\ref {#1})}
\newcommand{\Cbb}{\mathbb{C}}
\newcommand{\Rbb}{\mathbb{R}}
\newcommand{\Zbb}{\mathbb{Z}}
\newcommand{\Ccal}{\mathcal{C}}
\newcommand{\Hcal}{\mathcal{H}}
\newcommand{\Kcal}{\mathcal{K}}
\newcommand{\Dcal}{\mathcal{D}}
\newcommand{\Mcal}{\mathcal{M}}
\newcommand{\Ga}{\alpha}
\newcommand{\Gvf}{\varphi}
\newcommand{\Gl}{\lambda}
\newcommand{\Gm}{\mu}
\newcommand{\Gv}{\nu}
\newcommand{\Gp}{\pi}
\newcommand{\Gs}{\sigma}
\newcommand{\Gz}{\zeta}
\newcommand{\GG}{\Gamma}
\newcommand{\GO}{\Omega}
\newcommand{\GY}{\Psi}
\newcommand{\beq}{\begin{equation}}
\newcommand{\eeq}{\end{equation}}
\def\ol{\overline}
\numberwithin{equation}{section}
\numberwithin{figure}{section}
\begin{document}

\title{Spectral properties of the Neumann-Poincar\'e operator on rotationally symmetric domains in two dimensions\thanks{This work was partially supported by NRF (of S. Korea) grant No. 2019R1A2B5B01069967.}}

\author{Yong-Gwan Ji\thanks{Department of Mathematics and Institute of Applied Mathematics, Inha University, Incheon 22212, S. Korea (22151063@inha.edu, hbkang@inha.ac.kr).} \and Hyeonbae Kang\footnotemark[2] }

\maketitle

\begin{abstract}
This paper concerns the spectral properties of the Neumann-Poincar\'e operator on $m$-fold rotationally symmetric planar domains. An $m$-fold rotationally symmetric simply connected domain $D$ is realized as the $m$th-root transform of a certain domain, say $\GO$. We prove that the domain of definition of the Neumann-Poincar\'e operator on $D$ is decomposed into invariant subspaces and the spectrum on one of them is the exact copy of the spectrum on $\GO$. It implies in particular that the spectrum on the transformed domain $D$ contains the spectrum on the original domain $\GO$ counting multiplicities.
\end{abstract}

\noindent{\footnotesize {\bf AMS subject classifications}. 31A10 (primary), 35P05, 30B10 (secondary)}

\noindent{\footnotesize {\bf Keywords}. Neumann-Poincar\'e operator, spectrum, $m$th-root transform, $m$-fold rotational symmetry, Grunsky coefficients, lemmniscate, Cassini oval}

%%%%%%%%%%%%%%%%%%%%%%%%%%%%
\section{Introduction}
%%%%%%%%%%%%%%%%%%%%%%%%%%%%

The purpose of the present paper is to investigate the spectral properties of the NP operator (NP is the acronym of Neumann-Poincar\'e) on $m$-fold rotationally symmetric planar domains, which are domains invariant under the rotation by the angle $2\Gp/m$. An $m$-fold rotationally symmetric domain, if it is simply connected, is realized as the $m$th-root transform of a certain domain (see below). We are particularly interested in how the spectrum of the original domain is inherited by the $m$th-root transform.

The $m$th-root transform is defined as follows. Let $\GO$ be a simply connected bounded domain containing $0$ in its interior. By the Riemann mapping theorem there exist a positive constant $R$ and a univalent function $\GY$ from $\{|z|>R\}$ onto $\Cbb \setminus \overline{\GO}$ which admits a Laurent series expansion of the form
\beq\label{exter_Riemann}
\GY(z) = z + a_0 + \frac{a_1}{z} + \frac{a_2}{z^2} + \cdots.
\eeq
For a positive integer $m$, the $m$th-root transform of $\GY$ is defined to be
\beq\label{mthroot}
\GY_m(z) := \GY(z^m)^{1/m}
\eeq
for $|z|> R^{1/m}$. It is known (see \cite[p. 28]{duren}) that $\GY_m$ is univalent.

Let $\GO_m$ be the bounded domain defined by $\Cbb \setminus \overline{\GO_m} := \GY_m(\{|z|> R^{1/m}\})$. The domain $\GO_m$ is called the $m$th-root transform of $\GO$. It is easy to see that $\GO_m$ is $m$-fold rotationally symmetric. It is also easy to prove (actually it is left as an exercise in \cite{duren}) that any simply connected $m$-fold rotationally symmetric domain is an $m$th-root transform of a certain domain.

Let $\GO$ be a bounded domain with the Lipschitz continuous boundary. The NP operator (NP is the acronym of Neumann-Poincar\'e), denoted by $\Kcal_{\p \GO}$, is the boundary integral operator on $\p \GO$ defined by
\beq\label{Kcal}
\Kcal_{\p \GO}[\Gvf](x) := \int_{\p\GO} \p_{\Gv_y} \GG(x-y) \, \Gvf(y) \, d\Gs(y), \quad x \in \p \GO,
\eeq
where $\p_{\Gv_y}$ denotes the outward normal derivative at the point $y \in \p \GO$ and $\GG(x-y)$ is the fundamental solution to the Laplacian. In two dimensions which this paper is concerned with, $\GG(x) = \frac{1}{2\Gp} \ln |x|$. The integral above is understood as the Cauchy principal value if $\p\GO$ is merely Lipschitz continuous.

The NP operator appears naturally when solving the classical boundary value problem on $\GO$ in terms of layer potential, which was initiated by Neumann and Poincar\'e as the name of the operator suggests. Recently there is rapidly growing interest in the spectral properties of the NP operator in relation to the plasmonic resonance on meta material and significant new results are being produced. We refer to recent surveys \cite{AKMP, Kang} for historical account and recent development on the operator. In relation to the subject of this paper, we mention that $\Kcal_{\p \GO}$ can be realized as a self-adjoint operator on $\Hcal^{1/2}(\p \GO)$ ($\Hcal^{1/2}(\p \GO)$ is the Sobolev space of order $1/2$ on the curve $\p\GO$) by introducing a new inner product \cite{KPS}. If $\p \GO$ is $C^{1,\Ga}$-smooth for some  $\Ga >0$, then $\Kcal_{\p \GO}$ is compact on $\Hcal^{1/2}(\p \GO)$ and hence its spectrum consists of eigenvalues and their limit point $0$; If it has a corner, then $\Kcal_{\p \GO}$ admits nontrivial essential spectrum (see, for example, \cite{PP2}).

The following is the main result of this paper. Here and throughout this paper, $\Gs(\Kcal_{\p \GO}, \Hcal)$ denotes the spectrum of $\Kcal_{\p \GO}$ on $\Hcal$ for a subspace $\Hcal$ of $\Hcal^{1/2}(\p\GO)$ invariant under $\Kcal_{\p \GO}$. When $\Hcal=\Hcal^{1/2}(\p\GO)$, we denote it by $\Gs(\Kcal_{\p \GO})$.

\begin{theorem}\label{main}
Let $\GO$ be a simply connected domain with $C^{1,\Ga}$ boundary for some $\Ga>0$. Suppose $0 \in \GO$ and let $\GO_m$ be the $m$th-root transform of $\GO$. Then, there are subspaces $\Hcal_0, \ldots, \Hcal_{m_*}$ ($m_*=m/2$ if $m$ is even and $m_*=(m-1)/2$ if $m$ is odd) of $\Hcal^{1/2}(\p \GO_m)$ invariant under $\Kcal_{\p \GO_m}$ such that
\beq\label{decom1}
\Hcal^{1/2}(\p \GO_m)= \bigoplus_{j=0}^{m_*} \Hcal_j
\eeq
and
\beq\label{decom2}
\Gs(\Kcal_{\p \GO})= \bigcup_{j=0}^{m_*} \Gs(\Kcal_{\p \GO}, \Hcal_j).
\eeq
Moreover, there is a unitary transform $U$ from $\Hcal_{0}$ onto $\Hcal^{1/2}(\p\GO)$ such that
\beq\label{similar}
\Kcal_{\p \GO_m}|_{\Hcal_{0}} = U^{-1} \Kcal_{\p \GO} U,
\eeq
where $\Kcal_{\p \GO_m}|_{\Hcal_{0}}$ denotes the $\Kcal_{\p \GO_m}$ restricted to $\Hcal_{0}$, and hence
\beq\label{inclusion}
\Gs(\Kcal_{\p \GO}, \Hcal_0) = \Gs(\Kcal_{\p \GO}),
\eeq
counting multiplicities.
\end{theorem}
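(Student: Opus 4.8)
The plan is to combine two ingredients: the rotational symmetry of $\GO_m$ together with the invariance of $\Kcal_{\p\GO_m}$ under rigid motions, and the conformal naturality of the transmission problem that defines the double layer potential.

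\textbf{The decomposition \eqnref{decom1}--\eqnref{decom2}.} First I would record that $\GO_m$ is invariant under the rotation $\Gr(\Gx)=\Go\,\Gx$ with $\Go:=e^{2\Gp i/m}$, because $\GY_m(\Go z)=\Go\,\GY_m(z)$ for a fixed branch of the root. This induces $T\Gvf:=\Gvf\circ\Gr^{-1}$ on $\Hcal^{1/2}(\p\GO_m)$, which is unitary for the inner product of \cite{KPS} since that product is built from the single-layer potential and hence invariant under rigid motions. As $\GG$ is radial and normals and arclength are preserved by rotations, $\Kcal_{\p\GO_m}$ commutes with $T$, and $T^m=I$. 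Regarding $\Hcal^{1/2}(\p\GO_m)$ as a real Hilbert space carrying a representation of $\Zbb/m\Zbb$, its isotypic decomposition yields exactly $m_*+1$ real invariant summands: the fixed subspace $\Hcal_0=\ker(T-I)$, the subspace $\ker(T+I)$ when $m$ is even, and one summand $\Hcal_j$ for each conjugate pair $\{\Go^{\pm j}\}$. Each is $\Kcal_{\p\GO_m}$-invariant, giving \eqnref{decom1}; and since the summands are closed, mutually orthogonal and invariant, $\Gs(\Kcal_{\p\GO_m})=\bigcup_j\Gs(\Kcal_{\p\GO_m},\Hcal_j)$, which is \eqnref{decom2}.

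\textbf{The intertwiner.} The fixed subspace $\Hcal_0$ is precisely the rotation-invariant densities, namely those of the form $f\circ P$ with $P(\Gx):=\Gx^m$, because $\GY_m(z)^m=\GY(z^m)$ realizes $P$ as an $m$-to-$1$ conformal covering of $\p\GO$ by $\p\GO_m$. I would define $U:\Hcal^{1/2}(\p\GO)\to\Hcal_0$ by $Uf:=m^{-1/2}(f\circ P)$. For the intertwining I use the characterization of $\Dcal_{\p\GO}[f]$ as the unique function harmonic in $\GO$ and in $\Cbb\setminus\ol\GO$, decaying at infinity, with prescribed jump $f$ and continuous normal derivative across $\p\GO$, so that $\Kcal_{\p\GO}[f]$ is its boundary trace. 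Since $P$ is holomorphic with $P'\ne0$ on $\p\GO_m$ (this is where $0\in\GO_m$ enters), the pullback $\Dcal_{\p\GO}[f]\circ P$ is harmonic on each side, decays at infinity, has jump $f\circ P$, and — because the normal derivative picks up the common factor $|P'|$ on both sides — has continuous normal derivative across $\p\GO_m$. By uniqueness it equals $\Dcal_{\p\GO_m}[f\circ P]$, and taking traces gives $\Kcal_{\p\GO_m}[f\circ P]=(\Kcal_{\p\GO}[f])\circ P$, i.e. $U\Kcal_{\p\GO}=\Kcal_{\p\GO_m}U$, which is \eqnref{similar}.

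\textbf{Unitarity and the main obstacle.} For unitarity I would use that the inner product of \cite{KPS} equals (a multiple of) the total Dirichlet energy $\int_{\Rbb^2}|\nabla u|^2$ of the harmonic function $u$ that is continuous across $\p\GO$ with trace $f$ and decays at infinity. As Dirichlet energy is conformally invariant and $P$ covers $\GO$ by $\GO_m$ exactly $m$ times, the energy of $u\circ P$ is $m$ times that of $u$; hence $m^{-1/2}$ makes $U$ an isometry, and it is onto $\Hcal_0$ because every rotation-invariant $\Hcal^{1/2}$ density descends through $P$. Unitary equivalence then yields \eqnref{inclusion} with multiplicities. I expect the technical heart to be this functional-analytic bookkeeping rather than the geometry: confirming that the inner product of \cite{KPS} is genuinely the total Dirichlet energy and scales by the clean factor $m$ under the ramified covering, and handling the two-dimensional subtleties (logarithmic behavior of layer potentials at infinity, the role of constants and of the eigenvalue $\tfrac12$, and the regularity of $\p\GO_m$, which is $C^{1,\Ga}$ precisely because $0\in\GO$ forces $\GY_m'\ne0$). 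As an independent check, and the route matching the paper's emphasis on Grunsky coefficients, one verifies that the Grunsky coefficients $\Gg^{(m)}_{jl}$ of $\GY_m$ vanish unless $j+l\equiv0\ (\mathrm{mod}\ m)$ and satisfy $\Gg^{(m)}_{mp,mq}=\tfrac1m\,b_{pq}$, where $b_{pq}$ are those of $\GY$; the weight $\sqrt{(mp)(mq)}=m\sqrt{pq}$ absorbs the factor $\tfrac1m$, so the block of the weighted Grunsky operator indexed by multiples of $m$ is exactly the weighted Grunsky operator of $\GO$, reproducing $\Kcal_{\p\GO}$ on $\Hcal_0$ and confirming \eqnref{similar} concretely.
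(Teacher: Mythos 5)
Your proposal is essentially correct, but it travels a genuinely different road from the paper. The paper does everything through the Grunsky machinery: it proves that the modified Grunsky coefficients $\Gm^{D}_{n,k}$ of $D=\GO_m$ vanish unless $n+k\equiv 0\ (\mathrm{mod}\ m)$, takes $X_l$ to be the closed span of the basis functions $g^D_n=\frac{1}{\sqrt{|n|}}(\GY_D^{-1}/R)^n$ with $n\equiv -l\ (\mathrm{mod}\ m)$ (these are exactly your rotation eigenspaces, and $\Hcal_j=X_j\oplus X_{m-j}$ is your real isotypic summand), and proves \eqnref{similar} from the identity $c^{D}_{mn,mk}=c^{\GO}_{n,k}$, with unitarity of $U$ coming for free because $U$ sends one orthonormal basis ($g^D_{mn}$) to another ($g^\GO_n$) — indeed $g^D_{mn}=m^{-1/2}\,g^\GO_n\circ P$, so your $U$ is literally the paper's $U^{-1}$, and your closing ``independent check'' via weighted Grunsky coefficients is in substance the paper's actual proof. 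Your primary route — commuting $\Zbb/m\Zbb$-action for the decomposition, conformal transplantation of the transmission problem characterizing $\Dcal$ for the intertwining, and Dirichlet-energy scaling under the $m$-fold covering $P(\Gx)=\Gx^m$ for unitarity — is more conceptual and explains \emph{why} the theorem is true without computing a single Faber polynomial; the paper's route buys the explicit block matrices $\Mcal_j$ used later for the examples. Two points to tighten: (i) the norm the paper actually uses, $\|g\|_*=\|g\circ\GY\|$ with the homogeneous $\Hcal^{1/2}$ norm on the circle, is the \emph{exterior} Dirichlet energy of $g$ (transported by the exterior Riemann map), not the total two-sided energy you attribute to \cite{KPS}; fortunately both scale by exactly $m$ under $P$ (the exterior covering is unramified, the interior one is ramified only at the single point $0$), so your factor $m^{-1/2}$ is right either way — and even if the normalization were off, similarity alone already yields \eqnref{inclusion} with multiplicities. (ii) You should say a word on why every element of $\ker(T-I)$ descends through $P$ to an $\Hcal^{1/2}(\p\GO)$ function (local invertibility of $P$ on $\p\GO_m$, where $P'\neq 0$), so that $U$ is onto $\Hcal_0$; and the transmission characterization of $\Dcal_{\p\GO}[f]$ for $f\in\Hcal^{1/2}$ should be phrased weakly (normal derivatives in $\Hcal^{-1/2}$), which is standard for $C^{1,\Ga}$ boundaries. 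Neither point is a genuine gap.
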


Above theorem says in particular that
$$
\Gs(\Kcal_{\p \GO}) \subset \Gs(\Kcal_{\p \GO_m}).
$$
If $m \ge 2$, then the inclusion is proper counting multiplicities since $\Hcal_j$ ($j \ge 1$) is not empty. There are domains for which the inclusion is proper as sets, namely, there is an eigenvalue on $\p \GO_m$ which is not an eigenvalue on $\p\GO$. For example, let $\GO$ be a disk centered at a point other than the origin. Then spectrum $\Gs(\Kcal_{\p \GO})$ consists of $0$ and $1/2$, where $0$ is an eigenvalue of infinite multiplicities and $1/2$ is a simple eigenvalue. The $m$th-root transform $\GO_m$ of $\GO$ is the $m$-leaf symmetric lemniscate and $\Gs(\Kcal_{\p \GO_m})$ contains infinitely many nonzero eigenvalues in addition to the eigenvalue $0$ of infinite multiplicities since it is known that if the NP operator on a bounded planar domain is of finite rank, then the domain must be a disk \cite[Theorem 7.6]{Shapiro}.

There are not many classes of domains where NP spectra (spectra of the NP operators) are known. Ellipses are among them (see, for example, \cite{KPS}). Since the NP spectrum is invariant under M\"obius transform \cite{Schiffer} (see also \cite{JK}), the NP spectrum on the lima\c{c}on of Pascal can be computed. The lima\c{c}on of Pascal is the image of the unit disc under the map $w = z + A z^2$ for some constant $A$ and can be realized as the M\"obius transform of an ellipse (see \cite{AKM}). On the other hand, it is proved in \cite{KPS} that lemniscates has $0$ as an NP eigenvalue and its multiplicity is infinite. Lemniscates have NP eigenvalues other than $0$ even though we don't know what they are. Theorem \ref{main} yields a way to construct domains, via $m$th-root transforms, whose partial NP eigenvalues can be computed.

Theorem \ref{main} is proved using the representation of the NP operator in terms of the Grunsky coefficients which is obtained in \cite{Jung and Lim}. In the next section we briefly review the Grunsky coefficients and present a simple alternative proof of the representation based on Cauchy's theorem. Using the representation we also give an alternative proof of the fact that the NP operator on lemniscates have the infinite dimensional kernel which was proved in \cite[Theorem 9]{KPS} as mentioned before. The proof of Theorem \ref{main} is presented in section \ref{proof}. In the section to follow we discuss the matrix representation of the NP operator on $m$-fold rotationally symmetric planar domains. In the last section we discuss two examples: $m$-star shaped domains and the Cassini oval. It is interesting, but seems quite difficult (if possible), to compute the spectrum on other subspaces $\Hcal_1, \ldots, \Hcal_{m_*}$. We discuss possibility of computing them on the Cassini oval, which seems the simplest possible case: The Cassini oval is the 2nd-root transform of a disk.

Throughout this paper, we use notation $z$ to denote a point in $\Cbb$ or a point in $\Rbb^2$, namely, $z=(z_1,z_2)$.

%%%%%%%%%%%%%%%%%%%%%%%%%%%%%%%%%%%%%%%%%%%%%%%%%%%%%%%%%%%%%%%%%%%%%%%%%%%%%%%%%%%%
\section{The NP operator and Grunsky coefficients, lemniscates}\label{section NP Grunsky}
%%%%%%%%%%%%%%%%%%%%%%%%%%%%%%%%%%%%%%%%%%%%%%%%%%%%%%%%%%%%%%%%%%%%%%%%%%%%%%%%%%%%

%%%%%%%%%%%%%%%%%%%%%%%%%%%%%%%%%%%%%%%%%%%%%%%%%%%%%%%%%%%%%%%%%%%%%%%%%%%%%%%%%%%%
\subsection{Double layer potential and Cauchy transform}\label{subsec:cauchy}
%%%%%%%%%%%%%%%%%%%%%%%%%%%%%%%%%%%%%%%%%%%%%%%%%%%%%%%%%%%%%%%%%%%%%%%%%%%%%%%%%%%%

Here and afterwards, we assume that $\GO$ be a simply connected bounded planar domain with $C^{1,\Ga}$ boundary for some $\Ga>0$.
Note that the NP operator $\Kcal_{\p \GO}[\Gvf](z)$ is defined only for $z \in \p\GO$, in other words, $\Kcal_{\p \GO}$ is an operator on $\p\GO$. If we define the integral for $z$ outside $\p\GO$, it is called the double layer potential, that is,
\beq
\Dcal_{\p \GO}[\Gvf](z) := \int_{\p\GO} \p_{\Gv_w} \GG(w-z) \, \Gvf(w) \, d\Gs(w),\quad z \in \Cbb \setminus \p \GO.
\eeq
The NP operator and the double layer potential enjoy the following jump relation: for $z \in \p\GO$
\beq\label{jump}
\Dcal_{\p \GO}[\Gvf]|_{\pm}(z) := \lim_{t \to +0} \Dcal_{\p \GO}[\Gvf] (z \pm t \nu_z) = \left( \mp \frac{1}{2} + \Kcal_{\p \GO} \right) [\Gvf] (z),
\eeq
where $\nu_z$ denotes the (complexified) outward unit normal vector at $z$ (see, for example, \cite{AK}).

Let $\Ccal_{\p \GO}$ be the Cauchy transform, that is,
\beq\label{Cauchy_transform}
\Ccal_{\p \GO}[\Gvf](z) = \frac{1}{2\Gp i} \int_{\p\GO} \frac{\Gvf(w)}{w-z} \, dw, \quad z \in \Cbb \setminus \p \GO.
\eeq
The following relation holds (see \cite[p. 67]{Shapiro}):
\beq\label{doubleCauchy}
\Dcal_{\p \GO} [\Gvf](z) = \frac{\Ccal_{\p \GO}[\Gvf](z) + \overline{\Ccal_{\p \GO}[\overline{\Gvf}](z)}}{2}.
\eeq
In fact, it can be proved using the relations
$$
\p_{\Gv_z} = \Re{\left[ (\Gv_1 \p_{x} + \Gv_2 \p_{y}) + i(-\Gv_1 \p_{y} + \Gv_2 \p_{x}) \right]} =2\Re{ \left( \Gv_z \p_z \right)}
$$
and $\tau_w d\Gs(w) = dw$, where $\tau_w$ is the unit tangential vector (with the positive orientation) at $w \in \p \GO$. Here $z=x +i y$ and $\p_z= \frac{1}{2}(\p_{x} - i\p_{y})$.

%%%%%%%%%%%%%%%%%%%%%%%%%%%%%%%%%%%%%%%%%%%%%%%%%%%%%%%%%%%%%%%%%%%%%%%%%%%%%%%%%%%%
\subsection{The NP operator and Grunsky coefficients}\label{subsec:Grunsky}
%%%%%%%%%%%%%%%%%%%%%%%%%%%%%%%%%%%%%%%%%%%%%%%%%%%%%%%%%%%%%%%%%%%%%%%%%%%%%%%%%%%%

From now on we denote the set $\{|z|>R \}$ by $B_R$. Let $\Hcal^{1/2}(\p B_R)$ be the Sobolev space of order $1/2$ and let $\Hcal^{1/2}_0(\p B_R)$ be the subspace of functions of mean value zero, i.e., the collection of $f \in \Hcal^{1/2}(\p B_R)$ of the form $f(z)= \sum_{n \neq 0} a_n (z/R)^n$. For such a function, the homogeneous $\Hcal^{1/2}$ norm is defined by
\beq
\| f \|^2 = \sum_{n \neq 0} |n| |a_n|^2,
\eeq
and it is equivalent to the usual $\Hcal^{1/2}$ norm. For an integer $n \neq 0$, let
\beq
f_n(z)= \frac{1}{\sqrt{|n|}} \left( \frac{z}{R} \right)^n, \quad z \in \p B_R.
\eeq
The functions $f_n$ form an orthonormal basis for $\Hcal^{1/2}_0(\p B_R)$.

Let $\GY$ be the Riemann mapping from $\Cbb \setminus \overline{B_R}$ onto $\Cbb \setminus \overline{\GO}$ of the form \eqnref{exter_Riemann}. Let $\Hcal^{1/2}_0(\p\GO)$ be the collection of functions $g$ such that $g \circ \GY \in \Hcal^{1/2}_0(\p B_R)$, and define the norm on $\Hcal^{1/2}_0(\p\GO)$ by
\beq\label{starnorm}
\| g \|_{*} := \| g \circ \GY \|.
\eeq
Since $\p\GO$ is assumed to be $C^{1,\Ga}$, $\GY$ is $C^{1,\Ga}$ up to $\p\GO$ \cite[Theorem 3.6]{Pommerenke}. Thus, the norm $\| \ \|_*$ is equivalent to the usual $\Hcal^{1/2}$ norm on $\Hcal^{1/2}_0(\p\GO)$.

For each integer $n\neq 0$, define $g_n$ by
\beq\label{g_n1}
g_n(w): = (f_n \circ \GY^{-1})(w), \quad w \in \p\GO.
\eeq
Then, $g_n$, $n \neq 0$, form an orthonormal basis for $\Hcal_0^{1/2}(\p \GO)$ with respect to the inner product induced by the norm $\| \ \|_*$. We note that since $\overline{f_n} = f_{-n}$,
\beq\label{g_n2}
\overline{g_n} = g_{-n}.
\eeq

If $n>0$, $\left( \GY^{-1}(w) \right)^n$ can be uniquely decomposed as
\beq\label{Faber_poly}
\left( \GY^{-1}(w) \right)^n = F_n(w) + \widehat{F}_n(w),
\eeq
where $F_n(w)$ is a polynomial and $\widehat{F}_n(w)$ is an analytic function in $\Cbb \setminus \overline{\GO}$ such that $\widehat{F}_n(w) \to 0$ as $|w| \to \infty$. The function $F_n(w)$ is called the Faber polynomial of degree $n$ generated by $\GY^{-1}$. Let the Laurent series expansion of $\widehat{F}_n (\GY(z))$ be given by
\beq\label{Grunsky_coeff}
\widehat{F}_n(\GY(z)) = -\sum_{k=1}^\infty \frac{c_{n,k}}{z^k},
\eeq
so that
\beq\label{Faber_poly2}
\left( \GY^{-1}(w) \right)^n = F_n(w) -\sum_{k=1}^\infty \frac{c_{n,k}}{(\GY^{-1}(w))^k}
\eeq
The coefficients $c_{n,k}$ are called Grunsky coefficients. See \cite{Curtiss} for the Faber polynomials and the Grunsky coefficients.

The following theorem is obtained in \cite{Jung and Lim}. We include an alternative proof based on the Cauchy integral formula.

\begin{theorem}[\cite{Jung and Lim}]\label{NP_Grunsky}
Let
\beq\label{modified Grunsky}
\Gm_{n,k} = \frac{\sqrt{k}}{2\sqrt{n}}\frac{c_{n,k}}{R^{n+k}}, \quad n,k=1,2, \ldots.
\eeq
It holds that
\begin{align}
\Kcal_{\p \GO}[g_n] &= \sum_{k=1}^{\infty} \Gm_{n,k} \, g_{-k}, \label{Kcalgn1} \\
\Kcal_{\p \GO}[g_{-n}] &= \sum_{k=1}^{\infty} \overline{\Gm_{n,k}} \, g_{k}, \label{Kcalgn2}
\end{align}
for all $n=1,2, \cdots$.
\end{theorem}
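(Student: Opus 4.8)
The plan is to reduce everything to Cauchy's integral formula through the identity \eqnref{doubleCauchy} relating the double layer potential to the Cauchy transform, and then to read off $\Kcal_{\p\GO}[g_n]$ from the interior boundary limit via the jump relation \eqnref{jump}. Fix $n>0$. Using \eqnref{Faber_poly2} I would first record that on $\p\GO$ the basis function splits as $g_n = \frac{1}{\sqrt{n}R^n}(\GY^{-1})^n = \frac{1}{\sqrt{n}R^n}\bigl(F_n + \widehat{F}_n\bigr)$, where $F_n$ is a polynomial (hence entire) and $\widehat{F}_n(w) = -\sum_{k\ge1} c_{n,k}(\GY^{-1}(w))^{-k}$ is holomorphic in $\Cbb\setminus\overline{\GO}$ and vanishes at $\infty$. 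Since $\p\GO$ is $C^{1,\Ga}$, the maps $\GY$ and $\GY^{-1}$ extend $C^{1,\Ga}$ up to $\p\GO$, so both pieces extend continuously and Cauchy's theorem applies cleanly. Evaluating \eqnref{Cauchy_transform} on the interior of $\GO$ then yields $\Ccal_{\p\GO}[g_n] = \frac{1}{\sqrt{n}R^n}F_n$ there, because the polynomial part is reproduced by Cauchy's integral formula while the exterior-holomorphic tail $\widehat{F}_n$ contributes nothing to the interior Cauchy transform. Likewise $g_{-n} = \frac{R^n}{\sqrt{n}}(\GY^{-1})^{-n}$ is itself the boundary value of an exterior-holomorphic function vanishing at infinity, so $\Ccal_{\p\GO}[g_{-n}] = 0$ on the interior.

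Next I would insert these into \eqnref{doubleCauchy}. Since $\overline{g_n} = g_{-n}$ by \eqnref{g_n2}, the conjugate term $\overline{\Ccal_{\p\GO}[\overline{g_n}]} = \overline{\Ccal_{\p\GO}[g_{-n}]}$ vanishes on the interior, so the interior double layer potential is $\Dcal_{\p\GO}[g_n] = \frac{1}{2\sqrt{n}R^n}F_n$, whose boundary value from inside is $\frac{1}{2\sqrt{n}R^n}F_n(w)$. The jump relation \eqnref{jump} gives $\Kcal_{\p\GO}[g_n] = \Dcal_{\p\GO}[g_n]|_{-} - \frac{1}{2}g_n$; substituting the boundary decomposition of $g_n$ makes the two $F_n$ contributions cancel and leaves $\Kcal_{\p\GO}[g_n](w) = \frac{1}{2\sqrt{n}R^n}\sum_{k\ge1} c_{n,k}(\GY^{-1}(w))^{-k}$. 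Because $(\GY^{-1}(w))^{-k} = \frac{\sqrt{k}}{R^k}\,g_{-k}(w)$, the right-hand side is exactly $\sum_k \Gm_{n,k}\,g_{-k}$, which is \eqnref{Kcalgn1}. The identity \eqnref{Kcalgn2} then follows by conjugation: the NP kernel $\p_{\Gv_y}\GG(x-y)$ is real, so $\Kcal_{\p\GO}[\overline{g_n}] = \overline{\Kcal_{\p\GO}[g_n]}$, and combining this with $\overline{g_{-k}} = g_k$ from \eqnref{g_n2} yields \eqnref{Kcalgn2}.

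The main obstacle I anticipate is analytic rather than algebraic: one must justify that the Grunsky series $\sum_k c_{n,k}(\GY^{-1})^{-k}$ represents $\widehat{F}_n$ as a genuine element of $\Hcal^{1/2}(\p\GO)$ up to the boundary, and that the interior Cauchy transform and the nontangential boundary limit may be carried out term by term, so that the final identification of coefficients is legitimate. This hinges on decay estimates for the Grunsky coefficients $c_{n,k}$, and it is precisely here that the $C^{1,\Ga}$ smoothness of $\p\GO$ enters, guaranteeing $\Hcal^{1/2}$-convergence of the expansions and the boundedness of $\Kcal_{\p\GO}$, together with a uniform control ensuring that the limit $\Dcal_{\p\GO}[g_n]|_{-}$ commutes with the summation. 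Once this convergence is secured, every step above is a direct consequence of Cauchy's theorem and the jump relation.
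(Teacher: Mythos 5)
Your proposal is correct and follows essentially the same route as the paper's own proof: compute the interior Cauchy transform of $g_{\pm n}$ via the Faber decomposition \eqnref{Faber_poly2}, pass to the double layer potential through \eqnref{doubleCauchy}, apply the jump relation \eqnref{jump} to get $\Kcal_{\p\GO}[g_n]=-\widehat{F}_n/(2\sqrt{n}R^n)$, and read off the coefficients from \eqnref{Grunsky_coeff}. The only differences are cosmetic (the paper sets $R=1$, whereas you track $R$ explicitly) plus your added remarks on term-by-term convergence, which the paper leaves implicit.
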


\begin{proof}
We assume $R=1$ for simplicity. Since $F_n$ is analytic in $\GO$, $\widehat{F}_n$ is analytic in $\Cbb \setminus \overline{\GO}$ and satisfies $\widehat{F}_n(w) \to 0$ as $|w| \to \infty$, and both of them are continuous up to the boundaries, the Cauchy integral formula and Cauchy's theorem yield that $\Ccal_{\p \GO}[g_n](w)= F_n(w)/|n|^{1/2}$ if $w\in\GO$. Since $\GY^{-1}(w)^{-n} \to 0$ as $|w| \to \infty$, we see that $\Ccal_{\p \GO}[g_{-n}](w) =0$ if $w\in\GO$. It then follows from \eqnref{doubleCauchy} and \eqnref{g_n2} that $\Dcal_{\p \GO}[g_n](w) = F_n(w)/(2|n|^{1/2})$ if $w\in\GO$. We then infer from the jump relation \eqnref{jump} that
$$
\Kcal_{\p \GO}[g_n] = \Dcal_{\p \GO}[g_n]|_- - \frac{1}{2} g_n = - \frac{\widehat{F}_n}{2 \sqrt{|n|}}.
$$
Now, \eqnref{Kcalgn1} follows from \eqref{Grunsky_coeff} and the definition \eqnref{g_n1} of $g_n$. \eqnref{Kcalgn2} follows from \eqnref{g_n2} and \eqnref{Kcalgn1}.
\end{proof}

Theorem \ref{NP_Grunsky} shows that if $g =\sum_{n=1}^{\infty} (a_n g_{n} + b_n g_{-n}) \in \Hcal_{0}^{1/2}(\p \GO)$,
$$
\Kcal_{\p \GO}[g] = \sum_{k=1}^{\infty} \left( \sum_{n=1}^{\infty}\Gm_{n,k} a_n \right) \, g_{-k} + \sum_{k=1}^{\infty} \left( \sum_{n=1}^{\infty}\overline{\Gm_{n,k}} b_n \right)  \, g_{k}.
$$
Thus, if we identify $\Hcal_{0}^{1/2}(\p \GO)$ by $\ell^2 \times \ell^2$ via $g \mapsto (a,b)$ where $a=(a_1,a_2, \ldots)$ and $b=(b_1,b_2, \ldots)$, then the NP operator $\Kcal_{\p \GO}$ is identified with the operator, denoted by $[\Kcal_{\p \GO}]$, on $\ell^2 \times \ell^2$ defined by
\beq\label{NP_matrix_representation}
\left[\Kcal_{\p \GO}\right](a,b) = (\ol{\Mcal} b, \Mcal a),
\eeq
where $\Mcal= \Mcal_{\p \GO}$ is the one-sided infinite matrix defined by
\beq
\Mcal = (\Gm_{n,k})_{n,k \ge 1}^t.
\eeq
Here the superscript $t$ denotes the transpose. In \cite{Jung and Lim}, $\Kcal_{\p \GO}$ is represented by the two-sided infinite matrix.

Here comes a fact of crucial importance: Grunsky theorem says that
\begin{align}
kc_{n,k} = nc_{k,n}, \quad k,n=1,2,\ldots
\end{align}
(\cite{Jabotinsky}), which implies that $\Mcal$ is symmetric, namely,
\beq\label{Msymm}
\Mcal^t=\Mcal.
\eeq
It implies as was shown in \cite{Jung and Lim} that $[\Kcal_{\p \GO}]$ is self-adjoint on $\ell^2 \times \ell^2$, and equivalently, $\Kcal_{\p \GO}$ is self-adjoint on $\Hcal^{1/2}_0(\p\GO)$ equipped with the norm $\| \ \|_{*}$ defined by \eqnref{starnorm}.

We call the constants $\Gm_{n,k}$ defined by \eqref{modified Grunsky} the modified Grunsky coefficients on $\GO$. Sometimes we write $\Gm_{n,k}=\Gm_{n,k}^\GO$ to specify the domain where they are defined.

%%%%%%%%%%%%%%%%%%%%%%%%%%%%%%%%%%%%%%%%%%%%%%%%%%%%%%%%%%%%%%%%%%%%%%%%%%%%%%%%%%%%
\subsection{Lemniscates}\label{subsec:lemni}
%%%%%%%%%%%%%%%%%%%%%%%%%%%%%%%%%%%%%%%%%%%%%%%%%%%%%%%%%%%%%%%%%%%%%%%%%%%%%%%%%%%%

Let $\GO$ be the lemniscate whose boundary is defined by
\beq\label{analyticset}
\p \GO = \{ z : |P(z)|=R^n \},
\eeq
where $P$ is a polynomial of degree $n$ and $R$ is a number sufficiently large so that all the zeros of $P$ lie inside $\GO$.

The following theorem is proved in \cite{KPS}.
\begin{theorem}[\cite{KPS}]\label{thm:KPS}
If $\GO$ is a lemniscate whose boundary is defined by \eqref{analyticset}, then $\Kcal_{\p \GO}$ has an infinite dimensional kernel.
\end{theorem}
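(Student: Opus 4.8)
The plan is to read off the kernel directly from the Grunsky representation of Theorem \ref{NP_Grunsky}. Recall that $\Kcal_{\p\GO}[g_n] = \sum_{k\ge 1}\Gm_{n,k}\,g_{-k}$ with $\Gm_{n,k}$ proportional to the Grunsky coefficient $c_{n,k}$, and that $c_{n,k}$ is governed by the ``negative part'' $\widehat{F}_n$ in the decomposition $(\GY^{-1}(w))^n = F_n(w) + \widehat{F}_n(w)$ into a polynomial plus a function analytic in $\Cbb\setminus\overline{\GO}$ that vanishes at infinity. The whole argument will therefore reduce to exhibiting infinitely many indices $n$ for which $\widehat{F}_n\equiv 0$: once this holds, $\Gm_{n,k}=0$ for every $k$, and $g_n$ lies in the kernel.

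First I would identify the inverse exterior Riemann map explicitly. We may assume $P$ is monic of degree $n$ (otherwise divide $P$ by its leading coefficient and rescale $R$ accordingly). I claim that $\GY^{-1}(w) = P(w)^{1/n}$, where the root is the branch asymptotic to $w$ at infinity. Indeed, since all zeros of $P$ lie inside $\GO$, the function $P$ is zero-free on $\Cbb\setminus\overline{\GO}$, and because the total winding of $P$ along a large loop equals $n$, the branch $P^{1/n}$ is single-valued there; it is univalent, asymptotic to $w$ at infinity, and carries $\{|P(w)|=R^n\}=\p\GO$ onto $\{|\zeta|=R\}$, so it coincides with $\GY^{-1}$ by uniqueness of the exterior conformal map.

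With this identification the key algebraic fact is immediate: for every integer $j\ge 1$,
\beq
\left(\GY^{-1}(w)\right)^{jn} = P(w)^{j},
\eeq
which is a polynomial of degree $jn$. By uniqueness of the decomposition \eqnref{Faber_poly} this forces $F_{jn}=P^{j}$ and $\widehat{F}_{jn}\equiv 0$; hence by \eqnref{Grunsky_coeff} all Grunsky coefficients $c_{jn,k}$ vanish, so $\Gm_{jn,k}=0$ for every $k$. Theorem \ref{NP_Grunsky} then yields $\Kcal_{\p\GO}[g_{jn}]=0$ for all $j\ge 1$. Since $\{g_{jn}\}_{j\ge 1}$ is an infinite orthonormal subset of $\Hcal_0^{1/2}(\p\GO)$, it is linearly independent, and therefore the kernel of $\Kcal_{\p\GO}$ is infinite dimensional.

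The only genuinely non-formal point, and the step I would treat most carefully, is the explicit identification $\GY^{-1}=P^{1/n}$: one must verify single-valuedness of the root on the (non-simply-connected) exterior domain and the correct normalization at infinity. Once that bookkeeping is settled, everything else is forced by the uniqueness of the Faber decomposition, and I anticipate no further analytic difficulty.
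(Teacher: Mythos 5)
Your proof is correct and follows essentially the same route as the paper: both identify $\GY^{-1}=P^{1/n}$ (the paper quotes this formula from Markusevi\v{c}), observe that $\left(\GY^{-1}(w)\right)^{jn}=P(w)^j$ is a polynomial so that $\widehat{F}_{jn}\equiv 0$ and all Grunsky coefficients $c_{jn,k}$ vanish, and then conclude from Theorem~\ref{NP_Grunsky} that the orthonormal family $\{g_{jn}\}_{j\ge 1}$ lies in the kernel. The only difference is that you spell out the single-valuedness and normalization check for the branch of $P^{1/n}$, which the paper delegates to a citation.
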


Here we give an alternative proof of this theorem using the representation of $\Kcal_{\p \GO}$ in terms of the Grunsky coefficients.

If $P(z)$ is given by $P(z)= z^n + a_{n-1}z^{n-1} + \cdots+a_0$,  then the Riemann mapping $\GY$ is given by,
\begin{align}\label{Marku}
\GY^{-1}(w) = w \left(1 + \frac{a_{n-1}}{w} + \frac{a_{n-2}}{w^2} + \cdots + \frac{a_{0}}{w^n}\right)^{1/n}
\end{align}
(see \cite{Markusevic}). Since $\left(\GY^{-1}(w)\right)^{nk} = P(w)^k$, $\widehat{F}_{nk}(w)=0$ and hence the Grunsky coefficients $c_{nk,l}$ is $0$ for all $k$ and $l$. Thus $\Kcal_{\p \GO}[g_{nk}]=0$ for all $k \neq 0$ by Theorem \ref{NP_Grunsky}.

\begin{remark}
An $m$th-root transform of a lemniscate is also a lemniscate. In fact, if the lemniscate $\GO$ is given as before. Then its $m$th-root transform $\GO_m$ is given by
$$
\GY_m^{-1}(w) = \GY^{-1}(w^m)^{1/m} = w \left(1 + \frac{a_{n-1}}{w^m} + \frac{a_{n-2}}{w^{2m}} + \cdots + \frac{a_{0}}{w^{nm}}\right)^{\frac{1}{nm}},
$$
namely, it is the lemniscate defined by the polynomial $P(z)= z^{nm} + a_{n-1}z^{(n-1)m} + \cdots+a_0$.
\end{remark}

%%%%%%%%%%%%%%%%%%%%%%%%%%%%
\section{Proof of Theorem \ref{main}}\label{proof}
%%%%%%%%%%%%%%%%%%%%%%%%%%%%

Since $\Hcal^{1/2}(\p \GO_m)=\Hcal_0^{1/2}(\p \GO_m) \oplus X$ where $X$ is the one-dimensional subspace of constants, it suffices to prove theorem for $\Hcal_0^{1/2}(\p \GO_m)$.

Let $D$ be a simply connected $m$-fold rotationally symmetric (with respect to the point 0) domain. Let $\GY_D$ be the Riemann mapping from $|z|>R$ for some $R$ onto $\Cbb \setminus \overline{D}$ of the form \eqnref{exter_Riemann}. Let $F_n^{D}(w)$ and $c_{n,k}^{D}$ be the  $n$th Faber polynomial and Grunsky coefficients of $D$ so that the following relation holds
\beq\label{Faber_polynomial_m}
\left( \GY_D^{-1}(w) \right)^n = F_n^{D}(w) - \sum_{k=1}^\infty \frac{c_{n,k}^{D}}{\GY_D^{-1}(w)^k}.
\eeq
Let $\Gm_{n,k}^{D}$ be the modified Grunsky coefficients defined by \eqnref{modified Grunsky}.

\begin{lemma}
Let $D$ be a simply connected $m$-fold rotationally symmetric (with respect to the point 0) domain. Then, for $n,k=1,2,\ldots$,
\beq\label{properties_Grunsky}
\Gm_{n,k}^{D} = 0 \;\; \text{if} \;\; n+k \not\equiv 0 \;(\text{mod} \;\;m).
\eeq
\end{lemma}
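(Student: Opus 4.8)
The plan is to convert the $m$-fold rotational symmetry of $D$ into a functional equation for the exterior Riemann map $\GY_D$, and then feed that equation into the Faber--Grunsky decomposition \eqnref{Faber_polynomial_m}. Write $\Go = e^{2\Gp i/m}$, so that the symmetry of $D$ reads $\Go\, \overline{D} = \overline{D}$. First I would observe that $g(z) := \Go\, \GY_D(\Go^{-1} z)$ is again a univalent map from $\{|z|>R\}$ onto $\Cbb \setminus \overline{D}$: indeed $|\Go^{-1}z|=|z|>R$, and $\Go\,(\Cbb\setminus\overline{D}) = \Cbb\setminus\overline{D}$. Its Laurent expansion at infinity begins with $z$ (the leading coefficient is $1$, as in \eqnref{exter_Riemann}), so by the uniqueness of the normalized exterior Riemann map I would conclude $g = \GY_D$, that is,
\beq
\GY_D(\Go z) = \Go\, \GY_D(z).
\eeq

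Next I would record the Grunsky generating function $G_n(z) := \widehat{F}_n^{D}(\GY_D(z)) = -\sum_{k=1}^\infty c_{n,k}^{D}\, z^{-k}$, so that \eqnref{Faber_polynomial_m} evaluated at $w=\GY_D(z)$ becomes $z^n = F_n^{D}(\GY_D(z)) + G_n(z)$. Substituting $z\mapsto \Go z$ and using the functional equation gives $\Go^n z^n = F_n^{D}(\Go\, \GY_D(z)) + G_n(\Go z)$, while multiplying the original identity by $\Go^n$ gives $\Go^n z^n = \Go^n F_n^{D}(\GY_D(z)) + \Go^n G_n(z)$. Subtracting these yields
\beq
G_n(\Go z) - \Go^n G_n(z) = -\bigl[\, F_n^{D}(\Go\, \GY_D(z)) - \Go^n F_n^{D}(\GY_D(z)) \,\bigr].
\eeq

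The key step is to separate the two sides by their behaviour at infinity. The left-hand side is analytic in $\{|z|>R\}$ and tends to $0$ as $z\to\infty$, since each $G_n$ is a sum of negative powers of $z$. The right-hand side equals $-Q_n(\GY_D(z))$, where $Q_n(w):=F_n^{D}(\Go w)-\Go^n F_n^{D}(w)$; since $F_n^D(w)=w^n+(\text{lower order})$, the top-degree terms cancel and $\deg Q_n \le n-1$. Because $\GY_D(z)\to\infty$, a nonzero $Q_n$ would force $Q_n(\GY_D(z))$ to blow up (if $\deg Q_n\ge 1$) or to a nonzero constant (if $\deg Q_n = 0$), contradicting the decay of the left-hand side. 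Hence $Q_n\equiv 0$ and
\beq
G_n(\Go z) = \Go^n G_n(z).
\eeq
Comparing the coefficients of $z^{-k}$ on both sides gives $\Go^{-k} c_{n,k}^{D} = \Go^{n} c_{n,k}^{D}$, so $c_{n,k}^{D}=0$ unless $\Go^{\,n+k}=1$, i.e.\ unless $n+k\equiv 0 \pmod m$. Since $\Gm_{n,k}^{D}$ is a nonzero scalar multiple of $c_{n,k}^{D}$ by \eqnref{modified Grunsky}, this is precisely \eqnref{properties_Grunsky}.

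I expect the main obstacle to be the careful justification of the two separation arguments that rely on uniqueness: first, the uniqueness of the exterior Riemann map under the normalization \eqnref{exter_Riemann} (one must verify that requiring the leading coefficient to equal $1$ removes the rotational ambiguity, which is what forces $g=\GY_D$), and second, the growth-versus-decay dichotomy in the key step, where the polynomial-in-$w$ contribution and the decaying-in-$z$ contribution can only agree if the polynomial $Q_n$ vanishes identically. Everything after $Q_n\equiv 0$ is a routine coefficient comparison.
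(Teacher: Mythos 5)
Your proof is correct and follows essentially the same route as the paper: both exploit the functional equation $\GY_D(\Go z)=\Go\,\GY_D(z)$ (the paper states the equivalent relation for $\GY_D^{-1}$), apply the rotation to the Faber--Grunsky identity \eqnref{Faber_polynomial_m}, and compare the two resulting expansions to get $c_{n,k}^{D}(\Go^{n+k}-1)=0$. Your write-up is merely more explicit about the two points the paper leaves implicit, namely the uniqueness of the normalized exterior Riemann map and the separation of the polynomial part from the part decaying at infinity.
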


\begin{proof}
Let $\Gz_m = e^{2\pi i / m}$. From the rotational symmetry of $D$, $\Gz_m^{-1} \GY_D^{-1}(\Gz_m w) = \GY_D^{-1}(w)$. Thus, we have
$$
\left( \GY_D^{-1}(\Gz_m w) \right)^n = \Gz_m^n\left( \GY_D^{-1}(w) \right)^n
= \Gz_m^n F_n^{D}(w) -\sum_{k=1}^\infty \frac{\Gz_m^n c_{n,k}^{D}}{\left( \GY_D^{-1}(w) \right)^k}.
$$
On the other hands,
$$
\left( \GY_D^{-1}(\Gz_m w) \right)^n = F_n^{D}(\Gz_m w) -\sum_{k=1}^\infty \frac{c_{n,k}^{D}}{\Gz_m^k\left( \GY_D^{-1}(w) \right)^k}.
$$
By equating two identities above, we obtain
$$
c_{n,k}^{D} \left( \Gz_m^{n+k} - 1 \right) = 0,
$$
and hence $c_{n,k}^{D} = 0$ if $n+k \not\equiv 0 \;(\text{mod} \;\;m)$. Thus, \eqnref{properties_Grunsky} follows.
\end{proof}

Let $\GO$ be the simply connected domain whose $m$th-root transform is $D$, namely,
\beq\label{GOmD1}
\GO_m=D.
\eeq
For an existence of such a domain, see \cite{duren} as we mentioned in introduction. Let $\GY_\GO$ be the Riemann mapping from $|z|>R^m$ onto $\Cbb \setminus \overline{\GO}$ such that
\beq\label{GOmD2}
\GY_D(z)=\GY_\GO(z^m)^{1/m}.
\eeq
Let $F_n^{\GO}(w)$ and $c_{n,k}^{\GO}$ be the  $n$th Faber polynomial and Grunsky coefficients of $\GO$ so that the relation \eqnref{Faber_polynomial_m} holds with $D$ replaced with $\GO$. Let $\Gm_{n,k}^{\GO}$ be the modified Grunsky coefficients.

\begin{lemma}
Suppose $D=\GO_m$. Then
\beq\label{cidentity}
\Gm_{{mn},mk}^{D} = \Gm_{{n},k}^{\GO}
\eeq
for all $k,n=1,2,\ldots$.
\end{lemma}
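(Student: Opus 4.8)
The plan is to reduce \eqnref{cidentity} to the identity $c_{mn,mk}^{D}=c_{n,k}^{\GO}$ for the ordinary Grunsky coefficients, and then to prove the latter by inserting the relation \eqnref{GOmD2} into the Faber--Grunsky decomposition \eqnref{Faber_polynomial_m}. First I would dispose of the normalizing factors. Because $\GY_D$ is defined on $\{|z|>R\}$ while $\GY_\GO$ is defined on $\{|z|>R^m\}$, the definition \eqnref{modified Grunsky} gives
\[
\Gm_{mn,mk}^{D}=\frac{\sqrt{mk}}{2\sqrt{mn}}\frac{c_{mn,mk}^{D}}{R^{mn+mk}}=\frac{\sqrt{k}}{2\sqrt{n}}\frac{c_{mn,mk}^{D}}{R^{m(n+k)}},
\qquad
\Gm_{n,k}^{\GO}=\frac{\sqrt{k}}{2\sqrt{n}}\frac{c_{n,k}^{\GO}}{(R^{m})^{n+k}}.
\]
Since $(R^{m})^{n+k}=R^{m(n+k)}$, the prefactors and the powers of $R$ agree, so \eqnref{cidentity} is equivalent to $c_{mn,mk}^{D}=c_{n,k}^{\GO}$.

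Next I would extract the functional relation between the inverse maps. Raising \eqnref{GOmD2} to the $m$th power gives $\GY_D(z)^m=\GY_\GO(z^m)$; writing $w=\GY_D(z)$ yields the key identity $\GY_\GO^{-1}(w^m)=\bigl(\GY_D^{-1}(w)\bigr)^m$, in which the branch of the root is irrelevant. I would then take the decomposition \eqnref{Faber_polynomial_m} for $\GO$ at index $n$, substitute $u=w^m$, and use the key identity to obtain
\[
\bigl(\GY_D^{-1}(w)\bigr)^{mn}=F_n^{\GO}(w^m)-\sum_{k\ge1}\frac{c_{n,k}^{\GO}}{\bigl(\GY_D^{-1}(w)\bigr)^{mk}}.
\]
On the other hand, \eqnref{Faber_polynomial_m} for $D$ at index $mn$ reads $\bigl(\GY_D^{-1}(w)\bigr)^{mn}=F_{mn}^{D}(w)-\sum_{l\ge1}c_{mn,l}^{D}\bigl(\GY_D^{-1}(w)\bigr)^{-l}$, and by \eqnref{properties_Grunsky} the coefficient $c_{mn,l}^{D}$ vanishes unless $l\equiv0\pmod m$, so only the terms $l=mk$ survive.

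Finally I would invoke the uniqueness of the decomposition in \eqnref{Faber_poly}. The function $F_n^{\GO}(w^m)$ is a polynomial in $w$ of degree $mn$, while each $\bigl(\GY_D^{-1}(w)\bigr)^{-mk}$ is analytic on $\Cbb\setminus\overline{D}$ and vanishes at infinity because $\GY_D^{-1}(w)\sim w$ and $|\GY_D^{-1}(w)|>R>0$ there; the substituted series converges on $\Cbb\setminus\overline{D}$ since the original Grunsky series \eqnref{Grunsky_coeff} converges for $|\GY_\GO^{-1}(u)|>R^m$, which transforms into $|\GY_D^{-1}(w)|>R$. Hence the first displayed formula exhibits $\bigl(\GY_D^{-1}(w)\bigr)^{mn}$ as its polynomial part plus an analytic part vanishing at infinity, and comparing it with the $D$-decomposition forces $F_{mn}^{D}(w)=F_n^{\GO}(w^m)$ and, matching the coefficients of $\bigl(\GY_D^{-1}(w)\bigr)^{-mk}$, the desired equality $c_{mn,mk}^{D}=c_{n,k}^{\GO}$. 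I expect the only point requiring care to be the justification that the substituted series is the genuine ``$\widehat{F}$'' part---that is, its convergence and decay on $\Cbb\setminus\overline{D}$---after which the uniqueness of the Faber decomposition closes the argument immediately; the rest is algebra.
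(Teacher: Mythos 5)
Your proposal is correct and follows essentially the same route as the paper's proof: raise \eqnref{GOmD2} to the $m$th power to get $\GY_\GO^{-1}(w^m)=(\GY_D^{-1}(w))^m$, substitute into the Faber--Grunsky decomposition for $\GO$, compare with the decomposition for $D$ at index $mn$ using \eqnref{properties_Grunsky}, and conclude by uniqueness of the decomposition. Your explicit bookkeeping of the radii $R$ versus $R^m$ in the normalizing factors, and the justification that the substituted series is the genuine vanishing-at-infinity part, are details the paper leaves implicit but are correct.
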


\begin{proof}
One can see from \eqnref{GOmD2} that the following holds:
$$
\GY_D^{-1}(w)^m = \GY_\GO^{-1}(w^m).
$$
We thus have
$$
\left(\GY_D^{-1}(w)\right)^{mn} = \left(\GY_\GO^{-1}(w^m)\right)^{n} = F_n^{\GO}(w^m) -\sum_{k=1}^\infty \frac{c_{n,k}^{\GO}}{\left( \GY_\GO^{-1}(w^m) \right)^k}.
$$
On the other hands, we have from \eqref{properties_Grunsky} that
\begin{align*}
\left(\GY_D^{-1}(w)\right)^{mn} &= F_{mn}^{D}(w) -\sum_{k=1}^\infty \frac{c_{{mn},k}^{D}}{\left( \GY_D^{-1}(w) \right)^k} \\
&= F_{mn}^{D}(w) -\sum_{k=1}^\infty \frac{c_{{mn},mk}^{D}}{\left( \GY_D^{-1}(w) \right)^{mk}} \\
&= F_{mn}^{D}(w) -\sum_{k=1}^\infty \frac{c_{{mn},mk}^{D}}{\left( \GY_\GO^{-1}(w^m) \right)^{k}}.
\end{align*}
Therefore we have $c_{{mn},mk}^{D} = c_{{n},k}^{\GO}$ and \eqnref{cidentity} follows.
\end{proof}

\medskip

\noindent{\sl Proof of Theorem \ref{main}}.
Let $g^D_n$ be the function defined by \eqnref{g_n1}, namely,
\beq\label{g^D_n1}
g^D_n(w): = \frac{1}{\sqrt{|n|}}\left(\frac{\GY_D^{-1}(w)}{R} \right)^n, \quad w \in \p D.
\eeq
For $l=0,1, \cdots, m-1$, we define the subspace $X_{l}$ of $\Hcal_0^{1/2}(\p D)$ by
\begin{align}\label{H_l}
X_{l}:= \overline{\operatorname{span}\{g^D_{n} \;:\; n=jm-l, \ j \in \Zbb \}}
\end{align}
(if $l=0$, $j$ runs in $\Zbb \setminus \{0\}$). Then each $X_{l}$ is invariant under $\Kcal_{\p D}$, \textit{i.e.},
\beq\label{invariant_subspace}
\Kcal_{\p D} \left(X_{l}\right) \subset X_{l}, \quad l=0,1, \ldots, m.
\eeq
In fact, if $g_n^{D} \in X_{l}$ and $n>0$, then $n=jm-l$ for some $j>0$.  Then $\Gm_{n,k} \neq 0$ only when $k=im-(m-l)$ for some $i>0$ by \eqref{properties_Grunsky}. It thus follows from Theorem \ref{NP_Grunsky} that
$$
\Kcal_{\p D}[g_n^{D}] = \sum_{k=1}^{\infty} \Gm_{n,k}^{D} g_{-k}^{D}
= \sum_{i=1}^{\infty} \Gm_{n,im-(m-l)}^{D} g_{-(im-(m-l))}^{D}.
$$
Since $g_{-(im-(m-l))}^{D} \in X_{l}$ by the definition of $X_{l}$, $\Kcal_{\p D}[g_n^{D}] \in X_{l}$. The other case when $n <0$ can be dealt with similarly.

For $j=0,1, \cdots, m_*$, we define the subspace $\Hcal_{j}$ by 
\beq
\Hcal_{j}= 
\begin{cases}
X_j \quad &\mbox{if either $j=0$ or $m$ is even and $j=m/2$}, \\
X_j \oplus X_{m-j} \quad &\mbox{otherwise}.
\end{cases}
\eeq
Then, $\Hcal^{1/2}(\p D)$ is the direct sum of $\Hcal_0, \Hcal_1, \ldots, \Hcal_{m_*}$, namely,
\beq\label{decom3}
\Hcal^{1/2}_0(\p D)= \Hcal_0 \oplus \Hcal_1 \oplus \cdots \oplus \Hcal_{m_*}.
\eeq

One can see that $\Kcal_{\p D}$ is self-adjoint on each $\Hcal_{j}$. This can be seen clearly by the matrix representation of $\Kcal_{\p D}$ on $\Hcal_{j}$ similar to \eqnref{NP_matrix_representation}. In fact, The NP operator $\Kcal_{\p D}$ on $\Hcal_j$, $j=1,\ldots,m_*$, is represented by the operator $[\Kcal_{\p D}|_{\Hcal_j}]$ on $\ell^2 \times \ell^2$ as
\beq\label{j-rep}
\left[\Kcal_{\p D}|_{\Hcal_j} \right](a,b) = (\ol{\Mcal_{j}} b, \Mcal_{j} a),
\eeq
where $\Mcal_j$ is given in \eqnref{Mj} and \eqnref{Mj2} in the next section. Since $\Mcal_{j}$ is symmetric as one can see from its explicit form, $[\Kcal_{\p D}|_{\Hcal_j}]$ is self-adjoint on $\ell^2 \times \ell^2$ and so is $\Kcal_{\p D}$ on $\Hcal_{j}$. In particular, we have the identity \eqnref{decom2}.

Define $U: \Hcal_{0} \to \Hcal_0^{1/2}(\p\GO)$ by $U(g^{D}_{mn})=g^\GO_n$ for all $n \neq 0$. Since both $g^{D}_{mn}$ and $g^\GO_n$ are orthonormal systems, $U$ is a unitary transform.  If $n >0$, then by \eqnref{Kcalgn1}, we have
\begin{align*}
\Kcal_{\p \GO}[U(g^{D}_{mn})] = \Kcal_{\p \GO}[g_n^\GO] = \sum_{k=1}^{\infty} \Gm^\GO_{n,k} g^\GO_{-k} .
\end{align*}
By \eqnref{cidentity}, we have
\begin{align*}
\Kcal_{\p \GO}[U(g^{D}_{mn})] = \sum_{k=1}^{\infty} \Gm_{{mn},mk}^{D} g^\GO_{-k} =  \sum_{k=1}^{\infty} \Gm_{{mn},mk}^{D} U(g^{D}_{-mk}).
\end{align*}
It then follows from \eqnref{properties_Grunsky} that
\begin{align*}
\Kcal_{\p \GO}[U(g^{D}_{mn})] = U \Kcal_{\p D}[g^{D}_{mn}].
\end{align*}
The case for $n \le 0$ can be dealt with in the same way. This completes the proof.

%%%%%%%%%%%%%%%%%%%%%%%%%%%%
\section{Matrix representations}
%%%%%%%%%%%%%%%%%%%%%%%%%%%%

Let $D$ be a simply connected $m$-fold rotationally symmetric domain and let $\GO$ be the domain such that $D=\GO_m$. The matrix representations of $\Kcal_{\p D}$ would yield a clear picture on the spectral properties.

As in \eqnref{NP_matrix_representation}, $\Kcal_{\p D}$ is represented by the operator $[\Kcal_{\p D}]$ on $\ell^2 \times \ell^2$ defined by
$$
\left[\Kcal_{\p D}\right](a,b) = (\ol{\Mcal_{\p D}} b, \Mcal_{\p D} a),
$$
where $\Mcal_{\p D}= (\Gm_{n,k}^D)_{n,k \ge 1}^t$. According to \eqnref{properties_Grunsky},
$\Mcal_{\p D}$ takes the form
$$
\left[\begin{array}{ccc|c|ccc|c|c}
	      0        &         &  \Gm_{1,m-1}^D  &              &        0         &         &  \Gm_{1,2m-1}^D  &               &        \\
	               & \iddots &                 &      0       &                  & \iddots &                  &       0       & \cdots \\
	\Gm_{m-1,1}^D  &         &        0        &              & \Gm_{m-1,m+1}^D  &         &        0         &               &        \\ \hline
	               &    0    &                 & \Gm_{m,m}^D  &                  &    0    &                  & \Gm_{m,2m}^D  & \cdots \\ \hline
	      0        &         & \Gm_{m+1,m-1}^D &              &        0         &         & \Gm_{m+1,2m-1}^D &               &        \\
	               & \iddots &                 &      0       &                  & \iddots &                  &       0       & \cdots \\
	\Gm_{2m-1,1}^D &         &        0        &              & \Gm_{2m-1,m+1}^D &         &        0         &               &        \\ \hline
	               &    0    &                 & \Gm_{2m,m}^D &                  &    0    &                  & \Gm_{2m,2m}^D & \cdots \\ \hline
	               & \vdots  &                 &    \vdots    &                  & \vdots  &                  &    \vdots     & \ddots
\end{array} \right].
$$
Thus $\Mcal_j$ represneting $\Kcal_{\p D}|_{\Hcal_j}$ as in \eqnref{j-rep} is given as follow:

\begin{itemize}
\item[(i)] If $j=0$ or $j=m/2$ ($m$ is even), then
\begin{align}
\Mcal_{j} = \left[\begin{array}{cccc}
	 \Gm_{m-j,m-j}^{D}   &  \Gm_{m-j,2m- j}^{D}   &  \Gm_{m-j,3m-j}^{D}   & \cdots \\
	\Gm_{2m-j,m-j}^{D}  & \Gm_{2m-j,2m-j}^{D}  & \Gm_{2m-j,3m-j}^{D}  & \cdots \\
	\Gm_{3m-j,m-j}^{D} & \Gm_{3m-j,2m-j}^{D} & \Gm_{3m-j,3m-j}^{D} & \cdots \\
	      \vdots       &       \vdots        &       \vdots        & \ddots
\end{array}\right]. \label{Mj}
\end{align}

\item[(ii)] Otherwise,
\begin{align}
\Mcal_{j} = \left[\begin{array}{cc|cc|c}
	     0       &  \Gm^D_{j,m-j}  &       0        &  \Gm^D_{j,2m-j}  & \cdots \\
	\Gm^D_{m-j,j}  &       0       & \Gm^D_{m-j,m+j}  &       0        & \cdots \\ \hline
	     0       & \Gm^D_{m+j,m-j} &       0        & \Gm^D_{m+j,2m-j} & \cdots \\
	\Gm^D_{2m-j,j} &       0       & \Gm^D_{2m-j,m+j} &       0        & \cdots \\ \hline
	   \vdots    &    \vdots     &     \vdots     &     \vdots     & \ddots
\end{array} \right]. \label{Mj2}
\end{align}

\end{itemize}
Note that
\beq
\Mcal_{0} = \Mcal_{\p \GO},
\eeq
which is due to \eqnref{cidentity}.

We note that $\Mcal_{j}^t = \Mcal_{j}$. So, if $\Mcal_{j}$ is real, then it admits the diagonalization, and its eigenvalues completely determines those of $\Kcal_{\p D}|_{\Hcal_j}$. In fact, we have the following proposition.

\begin{prop}\label{prop2}
Suppose that $\Mcal_{j}$ is real. If $\Gl$ is an eigenvalue of $\Kcal_{\p D}|_{\Hcal_j}$, then either $\Gl$ or $-\Gl$ is an eigenvalue of $\Mcal_{j}$. Conversely, if $\Gl$ is an eigenvalue of $\Mcal_{j}$, then both $\Gl$ and $-\Gl$ are eigenvalues of $\Kcal_{\p D}|_{\Hcal_j}$.
\end{prop}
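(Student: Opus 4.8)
The hypothesis that $\Mcal_j$ is real means $\ol{\Mcal_j}=\Mcal_j$, so by \eqnref{j-rep} the operator $T:=[\Kcal_{\p D}|_{\Hcal_j}]$ acts on $\ell^2\times\ell^2$ by
\[
T(a,b)=(\Mcal_j b,\ \Mcal_j a).
\]
The plan is to diagonalize this off-diagonal (``swap'') block structure by a single fixed unitary change of variables. I would define $V:\ell^2\times\ell^2\to\ell^2\times\ell^2$ by $V(a,b)=\tfrac{1}{\sqrt2}(a+b,\ a-b)$. A direct computation gives $\|V(a,b)\|^2=\tfrac12(\|a+b\|^2+\|a-b\|^2)=\|a\|^2+\|b\|^2$, so $V$ is unitary with inverse $V^{-1}(u,v)=\tfrac{1}{\sqrt2}(u+v,\ u-v)$.

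Next I would conjugate $T$ by $V$. Writing $a=\tfrac{1}{\sqrt2}(u+v)$ and $b=\tfrac{1}{\sqrt2}(u-v)$ and substituting into $T(a,b)=(\Mcal_j b,\Mcal_j a)$, one finds after collecting the $u$- and $v$-parts that
\[
V T V^{-1}(u,v)=(\Mcal_j u,\ -\Mcal_j v),
\]
that is, $VTV^{-1}=\Mcal_j\oplus(-\Mcal_j)$ acts diagonally on the two copies of $\ell^2$. Since $V$ is unitary, $T$ and $\Mcal_j\oplus(-\Mcal_j)$ are unitarily equivalent, so the point spectrum of $T$ is exactly the union of the point spectra of $\Mcal_j$ and of $-\Mcal_j$; and $\Gl$ is an eigenvalue of $-\Mcal_j$ precisely when $-\Gl$ is an eigenvalue of $\Mcal_j$. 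Both assertions then follow at once. For the first: an eigenvalue $\Gl$ of $T$ is an eigenvalue of $\Mcal_j$ or of $-\Mcal_j$, i.e.\ $\Gl$ or $-\Gl$ is an eigenvalue of $\Mcal_j$. For the converse: if $\Mcal_j w=\Gl w$ with $w\neq0$, then $T(w,w)=\Gl(w,w)$ and $T(w,-w)=-\Gl(w,-w)$, exhibiting $\Gl$ and $-\Gl$ as eigenvalues of $\Kcal_{\p D}|_{\Hcal_j}$ with explicit eigenvectors.

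There is no serious obstacle here: the argument is a finite-dimensional linear-algebra identity that survives verbatim for the bounded operators $\Mcal_j$ on $\ell^2$. The only points requiring a word of care are that $V$ must be verified to be unitary, so that it transports eigenvalues of $\Kcal_{\p D}|_{\Hcal_j}$ faithfully without creating or destroying them, and that the value $\Gl=0$ is not an exception---it is absorbed uniformly, since $0$ is an eigenvalue of $-\Mcal_j$ if and only if it is one of $\Mcal_j$. I would present the $(u,v)$-conjugation as the clean route and append the eigenvectors $(w,\pm w)$, since these are exactly what the ``both $\Gl$ and $-\Gl$'' converse calls for.
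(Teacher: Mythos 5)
Your proof is correct and follows essentially the same route as the paper's: the paper works directly with the vectors $a+b$ and $a-b$ in the forward direction and the eigenvectors $(a,a)$, $(a,-a)$ in the converse, which is exactly the content of your conjugation by $V$. Packaging this as the unitary equivalence $[\Kcal_{\p D}|_{\Hcal_j}]\cong \Mcal_j\oplus(-\Mcal_j)$ is a harmless (and slightly more informative) reformulation of the same computation.
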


\begin{proof}
Suppose that $\Gl$ is an eigenvalue of $\Kcal_{\p D}|_{\Hcal_j}$. According to \eqnref{j-rep}, there is $a,b \in \ell^2$ (not both zero) such that
$$
\Mcal_{j} b=\Gl a, \quad \Mcal_{j} a=\Gl b.
$$
Thus we have
$$
\Mcal_{j} (a+b)=\Gl (a+b), \quad \Mcal_{j} (a-b)=-\Gl (a-b).
$$
Since either $a+b$ or $a-b$ is nonzero, we infer that $\Gl$ or $-\Gl$ is an eigenvalue of $\Mcal_{j}$.

If $\Gl$ is an eigenvalue of $\Mcal_{j}$, then there is nonzero $a \in \ell^2$ such that
$$
\Mcal_{j}a = \Gl a.
$$
Therefore, we have
\begin{align*}
&\left[ \Kcal_{\p D}|_{\Hcal_j}\right](a,a) = (\Mcal_{j}a, \Mcal_{j}a) = \Gl(a,a), \\
&\left[ \Kcal_{\p D}|_{\Hcal_j}\right](a,-a) = (-\Mcal_{j}a, \Mcal_{j}a) = -\Gl(a,-a).
\end{align*}
We conclude that both $\Gl$ and $-\Gl$ are eigenvalues of $\Kcal_{\p D}|_{\Hcal_j}$.
\end{proof}

%%%%%%%%%%%%%%%%%%%%%%%%%%%%
\section{Examples}
%%%%%%%%%%%%%%%%%%%%%%%%%%%%

%%%%%%%%%%%%%%%%%%%%%%%%%%%%
\subsection{$m$-star-shaped domains}
%%%%%%%%%%%%%%%%%%%%%%%%%%%%
For a positive integer $m$, let $S_m$ be the regular $m$-star, namely,
\begin{align}
S_m = \{x\Gz_m^k : 0\leq x \leq 4^{1/m}, \ k=0,1, \ldots, m-1 \},
\end{align}
where $\Gz_m = e^{2\pi i/m}$. Let
$$
\GY_m(z) = z\left(1+\frac{1}{z^m}\right)^{\frac{2}{m}}, \quad m=1,2,\ldots.
$$
It is known (see \cite{Bartalome and He, Henrici}) that $\GY_m$
maps $|z|>1$ conformally onto $\Cbb \setminus S_m$. The mapping $\GY_m$ is the $m$th-root transform of $\GY_1$, that is, $\GY_m(z)=\GY_1(z^m)^{1/m}$.

\begin{figure}[h]
	\centering
	\begin{subfigure}[h]{0.25\textwidth}
		\includegraphics[scale=0.3]{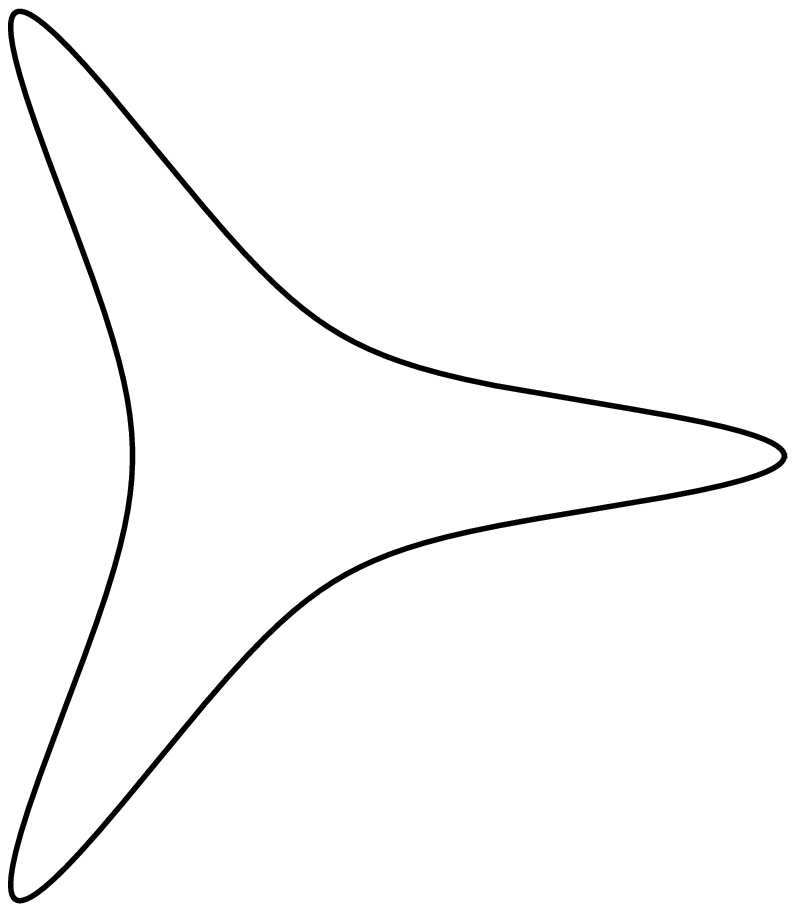}
		\centering
	\end{subfigure}
	\begin{subfigure}[h]{0.25\textwidth}
		\includegraphics[scale=0.3]{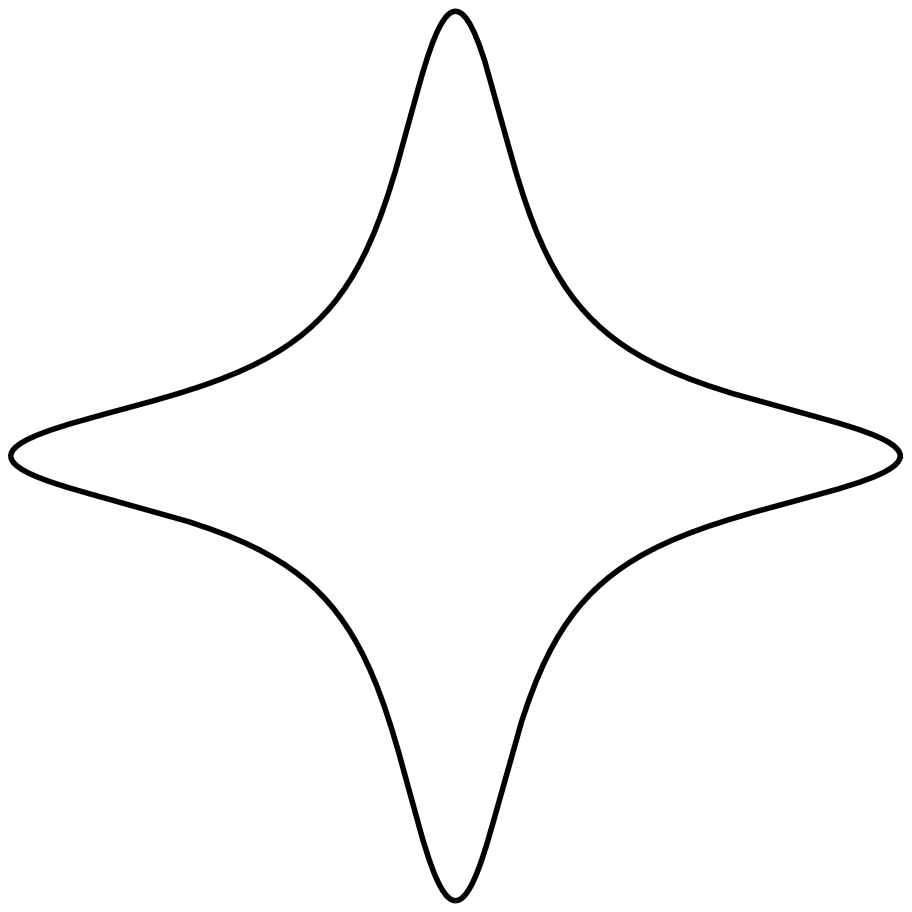}
		\centering
	\end{subfigure}
	\begin{subfigure}[h]{0.25\textwidth}
		\includegraphics[scale=0.3]{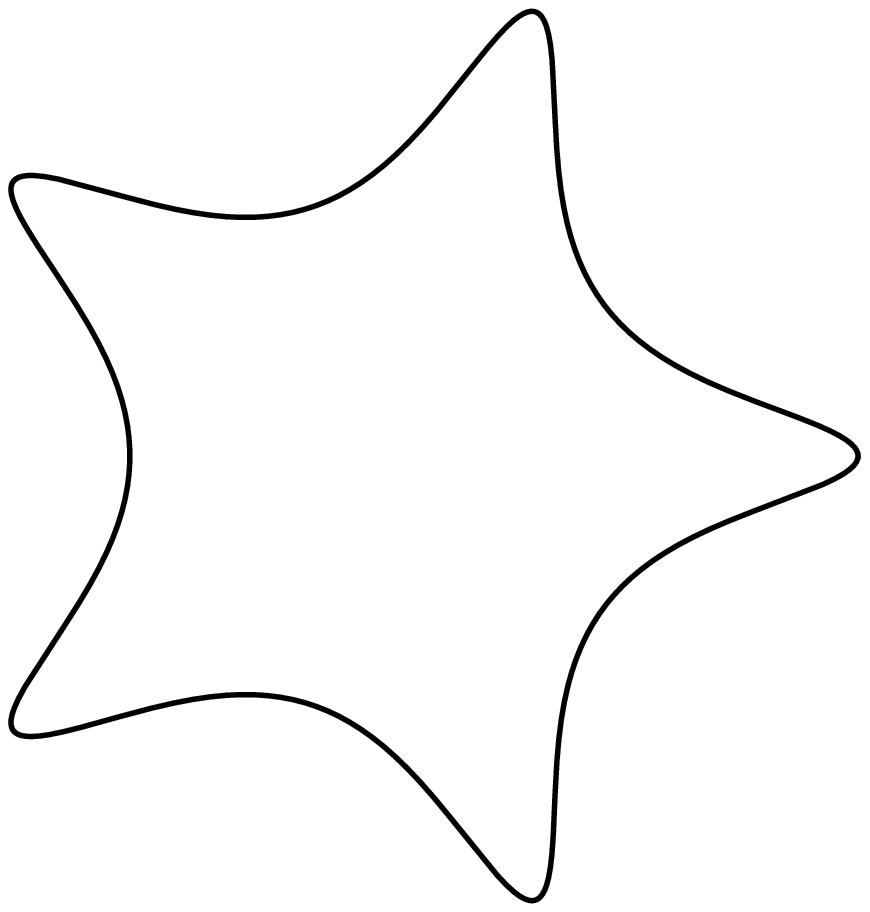}
		\centering	
    \end{subfigure}
	\caption{$m$-star-shaped domains $\p \GO_m$, $m=3,4,5$ (from left to right), which are images of $\{ |z|=R^{1/m}$ under $\Psi_m$. Here, $R=1.1$. All of them have $\pm R^{-2n}/2$ ($n =1,2,\ldots$) as their NP eigenvalues.}\label{fig1}
\end{figure}

Fix $R >1$ and let $\GO_{m}$ be bounded domains such that $\p \GO_{m}= \{ \GY_m(z) : \left|z\right| = R^{1/m}\}$. See Figure \ref{fig1} for shapes of $\p\GO_m$. It is known that the eigenvalues of the NP operator on the ellipse $x^2/a^2+y^2/b^2=1$ ($a \ge b$) are $\pm (a-b)^n/2(a+b)^n$, $n=1,2,\ldots,$ and 1/2 (see, for example, \cite[Proposition 8]{KPS}). Thus one can see easily that $\Gs(\Kcal_{\p \GO_1}) = \{ \pm R^{-2n}/2 \; : \; n=1,2, \ldots \} \cup \{1/2\}$.
Since $\GO_m$ is the $m$th-root transform of $\GO_1$, we infer from the Theorem \ref{main} that $\Gs(\Kcal_{\p \GO_{m}})$ contains this set.  Note that $\p \GO_2$ is an ellipse and $\Gs(\Kcal_{\p \GO_{2}}) = \{ \pm R^{-n}/2 \; : \; n=1,2, \ldots \} \cup \{1/2\}$.
Thus $\Gs(\Kcal_{\p \GO_{2}}) = \Gs(\Kcal_{\p \GO_{1}}) \cup \Gs(\Kcal_{\p \GO_{2}}, \Hcal_1)$ and $\Gs(\Kcal_{\p \GO_{2}}, \Hcal_1) = \{ \pm R^{-2n+1}/2 \; : \; n=0,1, \ldots \}$.

%%%%%%%%%%%%%%%%%%%%%%%%%%%%
\subsection{Cassini oval}
%%%%%%%%%%%%%%%%%%%%%%%%%%%%

A Cassini oval $D$ is a lemniscate defined by \eqref{analyticset} with the polynomial $P(z)= z^2-1$. See Figure \ref{fig2}.  The Riemann mappings $\GY_D$ for $D$ is given by
\begin{align}
\GY_D^{-1}(w)&=w\left(1-\frac{1}{w^2}\right)^{\frac{1}{2}}.
\end{align}
Note that $\GY_D$ is the 2nd-root transform of $\GY(z)= z+1$ and hence $D$ is the 2nd-root transform of $\GO=\{ |w-1|=R^{2} \}$. So $\Gs(\Kcal_{\p D})$ contains of $0$ as an eigenvalue of infinite multiplicity since $\Gs(\Kcal_{\p \GO})$ does. This fact is already known by Theorem \ref{thm:KPS}. Here, we look into $\Gs(\Kcal_{\p D}, \Hcal_1)$, spectrum of $\Kcal_{\p D}$ on the invariant subspace $\Hcal_1$.

\begin{figure}[h]
	\centering
	\includegraphics[scale=0.4]{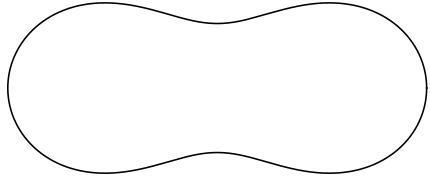}
	\caption{Cassini oval when $R=1.1$.}\label{fig2}
\end{figure}

If $n>0$, then
\begin{align*}
\left(\GY_D^{-1}(w)\right)^{2n+1} = w^{2n+1} \left(1-\frac{1}{w^2}\right)^{\frac{2n+1}{2}} 
= w^{2n+1} \sum_{j=0}^{\infty} (-1)^j \binom{\frac{2n+1}{2}}{j} \frac{1}{w^{2j}}. 
\end{align*}
Therefore, the Faber polynomial $F_{2n+1}(w)$ is given by
\begin{align}
F_{2n+1}(w) = \sum_{j=0}^{n} (-1)^{n-j} \binom{\frac{2n+1}{2}}{n-j}  w^{2j+1}. 
\end{align}
It then follows that 
\begin{align*}
F_{2n+1}(\GY_D(z)) = &\sum_{j=0}^{n} (-1)^{n-j} \binom{\frac{2n+1}{2}}{n-j} \left[z^{2j+1}\left(1+\frac{1}{z^2}\right)^{\frac{2j+1}{2}}\right] \\
= &\sum_{j=0}^{n} (-1)^{n-j} \binom{\frac{2n+1}{2}}{n-j} \left[z^{2j+1} \sum_{k=0}^{\infty} \binom{\frac{2j+1}{2}}{k} \frac{1}{z^{2k}}\right] \\
= & \sum_{k=0}^{n} \left(\sum_{j=k}^{n} (-1)^{n-j} \binom{\frac{2n+1}{2}}{n-j}  \binom{\frac{2j+1}{2}}{j-k}\right) z^{2k+1}\\
&+ \sum_{k=0}^{\infty}  \left(\sum_{j=0}^{n} (-1)^{n-j} \binom{\frac{2n+1}{2}}{n-j} \binom{\frac{2j+1}{2}}{k+j+1}\right) \frac{1}{z^{2k+1}}.
\end{align*}
So, the Grunsky coefficients are given by
\begin{align}
	c_{2n+1,2k+1} = \sum_{j=0}^{n} (-1)^{n-j} \binom{\frac{2n+1}{2}}{n-j}  \binom{\frac{2j+1}{2}}{k+j+1}.
\end{align}
In particular, they are real. Thus, eigenvalues of $\Mcal_1$ (and numbers of the opposite sign) are eigenvalues of $\Kcal_{\p D}$ on $\Hcal_1$ by Proposition \ref{prop2}.

By the definition \eqnref{modified Grunsky} of the modified Grunsky coefficients, we see that
the matrix $\Mcal_{1}$ is of the form
\begin{align}
\Mcal_{1} = \ds \left[\begin{array}{cccc}
	     \frac{1}{2}\frac{1}{R^2}      &   -\frac{\sqrt{3}}{8} \frac{1}{R^4}   &  \frac{\sqrt{5}}{16} \frac{1}{R^6}   & \cdots
\medskip \\
	-\frac{\sqrt{3}}{8}\frac{1}{R^4} &       \frac{1}{8}\frac{1}{R^6}        & -\frac{3\sqrt{15}}{128}\frac{1}{R^7} & \cdots \medskip \\
	\frac{\sqrt{5}}{16}\frac{1}{R^6} & -\frac{3\sqrt{15}}{128} \frac{1}{R^8} &     \frac{9}{128}\frac{1}{R^{10}}     & \cdots \medskip \\
	             \vdots              &                \vdots                 &                \vdots                &     \ddots
\end{array}\right].
\end{align}
It is not clear whether the eigenvalues of $\Mcal_1$ can be computed explicitly. But, eigenvalues can be computed numerically. Moreover, since $R>1$, finite submatrices yield good approximations of eigenvalues as Table \ref{table} shows.

Let $\left[ \Mcal_{1} \right]_{n}$ be the submatrix of $\Mcal_{1}$ of size $n\times n$ obtained by taking the first $n$ rows and columns. Let $\Gl_j$ be eigenvalues enumerated according to the rule $|\Gl_1| \geq |\Gl_2| \geq \cdots $. Table \ref{table} exhibits the first 10 eigenvalues when $n=10, 25, 50, 100$. There eigenvalues are computed by using Python built-in function \textit{eigvalsh} for numerical computation on eigenvalues of Hermitian matrix.

\begin{table}[h]
\center \begin{tabular}{|c||c|c|c|c|}
	\hline
	           & $\left[ \Mcal_{1} \right]_{10}$ & $\left[\Mcal_{1} \right]_{25}$ & $\left[ \Mcal_{1} \right]_{50}$ & $\left[ \Mcal_{1} \right]_{100}$ \\ \hline\hline
	 $\Gl_1$   &            0.249194             &            0.249279            &            0.249280             &             0.249280             \\ \hline
	 $\Gl_2$   &            0.0188675            &            0.019039            &            0.0190397            &            0.0190397             \\ \hline
	 $\Gl_3$   &           0.00126322            &           0.00135824           &           0.00135840            &            0.00135840            \\ \hline
	 $\Gl_4$   &          0.0000716940           &          0.0000967768          &          0.0000968816           &           0.0000968816           \\ \hline
	 $\Gl_5$   &    3.10066 $\times 10^{-6}$     &   6.86233  $\times 10^{-6}$    &   6.90960    $\times 10^{-6}$   &    6.90960   $\times 10^{-6}$    \\ \hline
	 $\Gl_6$   &   9.74582   $\times 10^{-8}$    &   4.77676   $\times 10^{-7}$   &   4.92791    $\times 10^{-7}$   &    4.92793   $\times 10^{-7}$    \\ \hline
	 $\Gl_7$   &    2.16813  $\times 10^{-9}$    &   3.16948   $\times 10^{-8}$   &   3.51450    $\times 10^{-8}$   &    3.51461   $\times 10^{-8}$    \\ \hline
	 $\Gl_8$   &   3.24992   $\times 10^{-11}$   &   1.93082  $\times 10^{-9}$    &   2.50618    $\times 10^{-9}$   &    2.50662   $\times 10^{-9}$    \\ \hline
	 $\Gl_9$   &   2.94949   $\times 10^{-13}$   &   1.05077  $\times 10^{-10}$   &  1.78615    $\times 10^{-10}$   &   1.78772   $\times 10^{-10}$    \\ \hline
	$\Gl_{10}$ &  1.22538     $\times 10^{-15}$  &  5.05405   $\times 10^{-12}$   &  1.27027    $\times 10^{-11}$   &   1.27501   $\times 10^{-11}$    \\ \hline
\end{tabular}
\caption{Eigenvalues of $\left[ \Mcal_{1} \right]_{n}$, $n=10, 25, 50, 100$, when $R=1.1$.}\label{table}
\end{table}

%%%%%%%%%%%%%%%%%%%%%%%%%%%%%%%%%%%%%%%%%%%%%%%%%%%%%%%%%%%%%%%%%%%%%%%%%%%%%%%%%%%%%%%%%%

\end{document}